\newtheorem{theorem}{Theorem}
\newtheorem{remark}{Remark}
\newcommand{\bbN}{{\mathbb N}}   
\def\bbZ{\mathbb{Z}}   
\def\Cessum{{\lower 1.5pt \hbox{\large C}}{-}\!\!\!\sum}
\def\CHI{\hbox{\raise .5ex \hbox{$\chi$}}}
\newcommand{\GLsp}{{\boldsymbol G\kern-0.1em\boldsymbol L}}
\newcommand{\inner}[2]{\langle #1,#2\rangle}   
\def\Msp{{\boldsymbol M}}   
\def\mv1{\Msp_v^1}   
\def\sabs{{{\raise 0.5pt \hbox{\footnotesize $|$}}}}
\def\shah{\sqcup \hspace{-0.15em}\sqcup}   
\def\shah1{{\makebox[2.3ex][s]{$\sqcup$\hspace{-0.15em}\hfill $\sqcup$}}}   
\def\slb{{{\raise 0.5pt \hbox{\footnotesize $[$}}}}
\def\slcb{{{\raise 0.5pt \hbox{\footnotesize $\{$}}}}
\def\slp{{{\raise 0.5pt \hbox{\footnotesize $($}}}}   
\newcommand{\snorm}{{{\raise 0.5pt \hbox{\footnotesize $\|$}}}}   
\def\srb{{{\raise 0.5pt \hbox{\footnotesize $]$}}}}
\def\srcb{{{\raise 0.5pt \hbox{\footnotesize $\}$}}}}
\def\srp{{{\raise 0.5pt \hbox{\footnotesize $)$}}}}
\tikzset{>=latex}
\def\beq{\begin{equation}}
\def\eeq{\end{equation}}
\newcommand{\C}{\mathbb{C}}
\newcommand{\Z}{\mathbb{Z}}
\newcommand{\R}{\mathbb{R}}
\renewcommand\Re{\mathsf{Re}}
\renewcommand\Im{\mathsf{Im}}
\title{An optimally concentrated Gabor transform for localized time-frequency components\thanks{This work was supported by the European project UNLocX, grant n. 255931.
B. R. is with the Signal Processing Laboratory 2,  Ecole Polytechnique F\'ed\'erale de Lausanne (EPFL), Station 11, 1015 Lausanne, Switzerland. G. S. and H. L. are with Genesis, domaine du petit Arbois, BP 69, 13545 Aix en Provence, France. B. T. is with the Aix-Marseille Universit\'e, CNRS, Centrale Marseille, LATP, UMR7353, 13453 Marseille, France. C. W. is with the Faculty of Mathematics, University of Vienna, Austria. D. O. is with the Eftimie Murgu University, Romania.
}}
\author{Benjamin Ricaud         \and    Guillaume Stempfel \and   Bruno Torr\'esani \and Christoph Wiesmeyr\and H\'el\`ene Lachambre \and Darian Onchis 
}
\begin{document}
\maketitle

\begin{abstract}
Gabor analysis is one of the most common instances of time-frequency signal analysis. Choosing a suitable window for the Gabor transform of a signal is often a challenge for practical applications, in particular in audio signal processing. Many time-frequency (TF) patterns of different shapes may be present in a signal and they can not all be sparsely represented in the same spectrogram. We propose several algorithms, which provide optimal windows for a user-selected TF pattern with respect to different concentration criteria. We base our optimization algorithm on $l^p$-norms as measure of TF spreading. 
For a given number of sampling points in the TF plane we also propose optimal lattices to be used with the obtained windows.
We illustrate the potentiality of the method on selected numerical examples.
\end{abstract}
\section{Introduction}
The Gabor transform is widely used for the analysis of non-stationary signals, in particular in audio signal processing. 
In contrast to the classical Fourier Transform it allows for detection and estimation of localized time-frequency (TF) components or time-evolving frequency components. 
It is also used for modifying or filtering the signal in a limited TF region (through Gabor multipliers). 
The Gabor transformation coefficients are obtained by testing the signal against time frequency shifted copies of a window function.
The TF representation hence depends crucially on the choice of this predefined window and on the lattice giving the coordinates of the TF shifts. 

This freedom of choice however rises the important question of which window and lattice are the best for some given signal.
Indeed, this choice is crucial as the precision of the Gabor transform in time and frequency is limited by the uncertainty principle and it is not possible to have arbitrarily good time and frequency resolution at the same time.
It is in particular the decay of the chosen window on the time and frequency side that allows balancing time and frequency precision of the corresponding Gabor transform.
It often requires a delicate tuning from the user based on a visual appreciation of the spectrogram. 
Moreover, in some cases changing the size of the window is not sufficient for obtaining sharply concentrated information in the TF plane. 
Typical examples, where one needs to control more than the width of the window are signals that contain chirps, i.e. frequency components with time-varying frequency.

Although often neglected, the choice of the Gabor lattice is an important issue. 
It does not directly concern precision in time and frequency but more the computational effort needed to process the representation of the signal and its modification. 
A dense lattice with many sampling points implies a high redundancy of the representation and will therefore require a high computational cost and a heavy load for the computer memory. 
On the other hand, if one samples too coarsely, firstly the TF representation will be coarse, preventing the detection of fine details.
Secondly, if one chooses a bad sampling strategy, the Gabor transform may not even be invertible or it may result in an unstable inversion process.

The question of the adaptation of the window to a signal of interest had been considered in~\cite{Jaillet07time}, where it has been shown that quite often a global adaptation does not yield sensible results. 
A typical example is a musical signal which contains harmonic sounds from string or wind instruments and impact sounds from percussive instruments.
The associated TF patterns can be described as lines parallel to the time axis for constant frequencies and lines parallel to the frequency axis for percussive events.
It is hence impossible to find a window (and lattice) suitable for the analysis of the entire signal. 
However it is often useful to be able to represent one particular pattern with high precision inside the signal even if the cost is a worse representation in other TF regions.
Motivations are either for determining the characteristics of this TF shape, for highlighting it among other TF signatures often nearby located or for helping a systematic detection of every occurrence of this special pattern inside the signal.

In the following we propose an algorithm for obtaining a Gabor window and lattice adapted to a particular TF pattern via the solution of an optimization problem. 
According to our definition, a window is adapted when it concentrates the pattern optimally, in other words when the pattern becomes the sparsest possible in the TF plane. 
Our procedure hence minimizes the sparsity in the TF plane of a chosen component embedded in the whole signal. 
The idea is similar to the one of Jaillet-Torr\'esani~\cite{Jaillet07time}. 
However, our optimization scheme is non-parametric and optimizes over all possible functions (windows) in the signal space. 
In addition their sparsity measure (entropy) is a particular case of our more general $\ell^p$-norm measure. These quantities can be related via R\'enyi entropies~\cite{RicaudTorresani2}. 
The present work has been inspired by~\cite{Optimwin} in which an algorithm is proposed for building the maximally concentrated window fitting a given shape in the TF (Ambiguity) plane.
The optimization approach using $\ell^p$-norms is similar however there are many differences: in the motivation, in the algorithm and the more detailed theoretical study. 
In addition, our method is numerically faster as it relies on an iterative gradient method instead of the diagonalization of a large matrix (Gabor multiplier). 

To illustrate the idea, two spectrograms of the same signal using different Gabor windows are shown in Fig.~\ref{fig.aircraft}. 
On the left, the spectrogram has been computed with a standard window (Gaussian function). 
One can notice two frequency components evolving in time (thick red lines). 
The region, where we seek to have an optimal representation, is chosen to be one of the main components at a recording time of around $12$ seconds and has a length of around $1$ second. 
The spectrogram using the optimal window is shown on the right: the frequency components get thinner (sparser) between $10$ and $16$ seconds recording time.
Moreover, as the structure is resolved in more details, it reveals that the components are more complex than single lines.

\begin{figure}
\includegraphics[width=.49\linewidth]{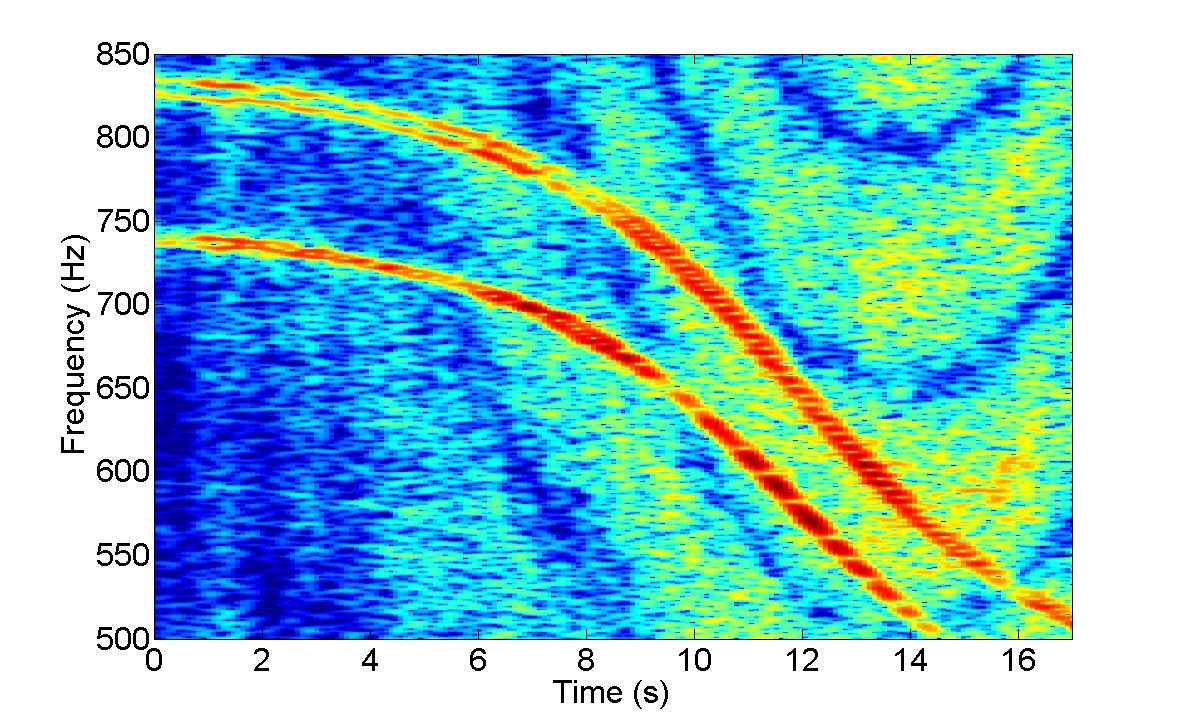}
\includegraphics[width=.49\linewidth]{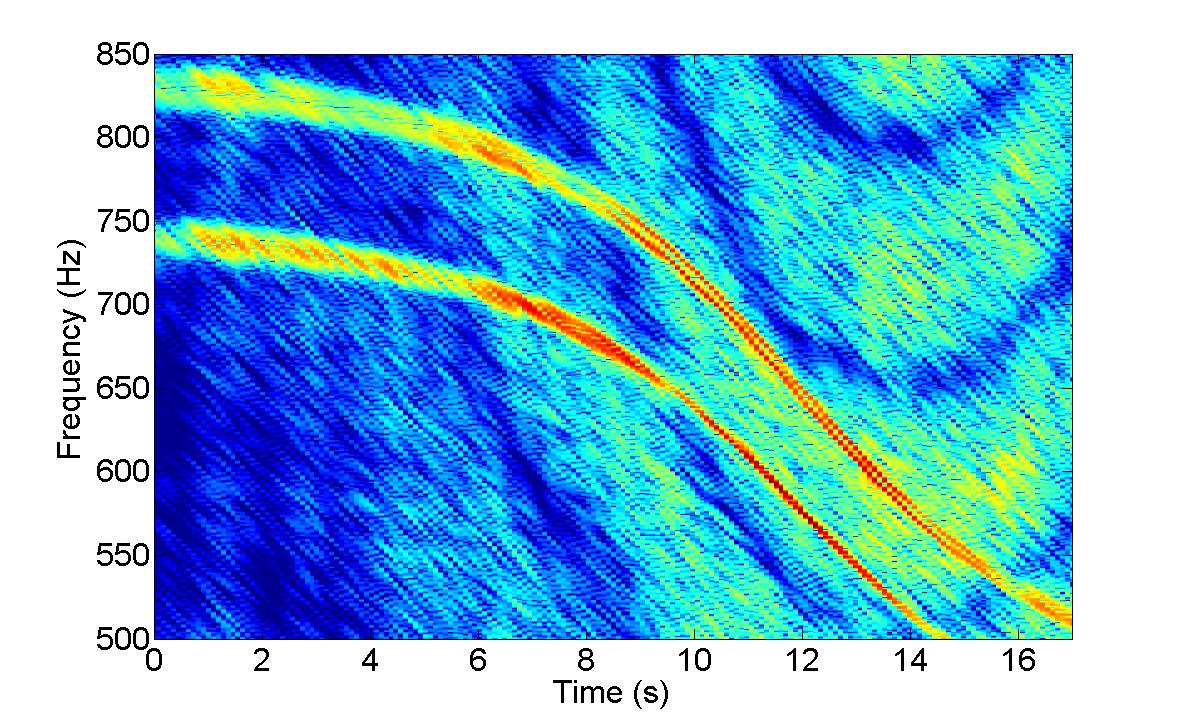}
\caption{Audio recording of an aircraft passing above the recording device (doppler effect). Left: spectrogram computed with a Gaussian window. Right: spectrogram computed with the optimal window. The optimization reveals the fine structure (i.e. 2 separated frequency components) in the TF domain where the optimization is performed, but kind of blurs the picture in other regions (namely the first 4 seconds) where another window would be better.}\label{fig.aircraft}
\end{figure}

In addition to the optimal window our method also provides an adapted lattice, suited for the optimal window. 
It is constructed to give an efficient placement of the Gabor atoms in the TF plane. 
For a fixed redundancy, the purpose is to provide the best tiling of the TF plane in order to represent as accurately as possible the energy distribution of the signal, with the given window.
A comprehensive study of the Gabor transform for general lattices can be found in~\cite{hosowi13}.
For example, a standard Gaussian window has a round-shaped ambiguity function and the optimal lattice would intuitively be hexagonal (like the problem of orange packing).
As in~\cite{hosowi13}, we have chosen three tuning parameters to adapt the lattice: the time and frequency step sizes (as in standard lattices) and the shearing.
Later we will give explicit sampling strategies for a class of windows called \emph{generalized Gaussians}, as this class in particularly adapted to the representation of audio signals. 

\section{Preliminaries}\label{prelim}

Here we will outline the theory of discrete Gabor transforms, for a more complete picture of the theory and algorithms we refer to \cite{fest98}.
An in-depth investigation of the computational complexities of the finite discrete Gabor transform can be found in \cite{so12,hosowi13}.
The continuous theory is described in \cite{gr01,ma09-1}.
%

To be able to define the Gabor transform we make use of the \emph{translation} and \emph{modulation} operators denoted by $T$ and $M$ respectively.
For $x, \xi \in \left\{ 0,\dots,N-1 \right\}$  we define them as
\begin{align}
  T_x g(t) &= g(t-x) \text{ and} \\
  M_\xi g(t) &= e^{2 \pi i \xi t/N}g(t),
  \label{}
\end{align}
for any $g\in \C^N$.
At this point we note that all the indices are to be interpreted modulo $N$ throughout the rest of this contribution (periodic boundary conditions).

With this notation we define the Gabor family with respect to some sampling set $\Lambda \subseteq \bbZ_N^2$ and some window function $g$ as
\begin{equation}
  \mathcal G(g,\Lambda) = \left\{ M_\xi T_x g:\, (x,\xi)\in \Lambda \right\}.
  \label{}
\end{equation}
The set $\bbZ_N^2$ is called \emph{time-frequency plane}, as every point corresponds to a \emph{time-frequency shift} of the window $g$.
The Gabor transform ${\cal V}_gf$ of a given signal $f$ with respect to $g$ and sampling set $\Lambda$ is given by its transform coefficients:
\begin{equation}
  \left\{{\cal V}_gf(x,\xi)=\inner{f}{M_\xi T_x g}: \, (x,\xi)\in \Lambda  \right\},
\end{equation}
where $\langle\cdot,\cdot\rangle$ stands for the scalar product in $\C^N$.

It is possible to perfectly reconstruct the signal from its coefficients if and only if ${\cal G}(g,\Lambda)$ is a frame \cite{ch08}, i.e. when there exist two bounds $A,B>0$ such that for all $f$ in the signal space
\begin{equation*}
  A\|f\|_{\ell^2(\C^N)}^2\le \|{\cal V}_gf\|_{\ell^2(\Lambda)}^2\le B \|f\|_{\ell^2(\C^N)}^2 .
\end{equation*}
The frame property of the family ${\cal G}(g,\Lambda)$ is equivalent to the existence of a (possibly non-unique) dual window $g_d$, such that the inverse transform is given by
\begin{equation}
  f = \sum_{(x,\xi)\in \Lambda}\inner{f}{M_\xi T_x g} M_\xi T_x g_d.  \label{eq:frameexp}
\end{equation}
We call a frame tight if the window is dual to itself.

In the definition of frames we have been overly careful with denoting norms, throughout the rest of the paper we will denote the $p$-norm by $\|\cdot\|_p$, as the underlying space will always be clear.

\noindent{\bf The space of time-frequency centered windows $\cal C$.}  In the following, we will use $p$-norms as measure of optimality. Due to the property that for any $\xi,x\in \Lambda$ and $g\in \ell^2$, $\|M_\xi T_x g\|_p=\|g\|_p$, any time-frequency shifted copy of a solution will be a solution. To avoid this ambiguity, we introduce the set of time-frequency centered windows $\cal C$ in which the optimization will be processed. To define this set we first define what is a TF centered window.
Let us introduce the mean value in time of $g\in\C^N$:
\begin{equation}\label{meant}
\mu_t(g)=\arg\left(\sum_{n=0}^{N-1} e^{2i\pi \frac{n}{N}}|g(n)|^2\right),
\end{equation}
which is the definition of the mean value for functions on periodic domains (von Mises' mean).
The mean value in frequency $\mu_f(g)$ is obtained by replacing $g$ by its Fourier transform in~\eqref{meant}. The TF centered version of $g$ reads
$$M_{-\mu_f}T_{-\mu_t}g.
$$
The set $\cal C$ of TF centered functions is 
$${\cal C}=\left\{g\in\ell^2(\C^N), \mu_t(g)=\mu_f(g)=0\right\}.
$$
\section{Optimal window}


For a given Gabor lattice, we call a window optimal if it maximally concentrates the energy of the signal in the TF plane. 
Given a window with unit energy we use the $\ell^p$-norm with $p>2$ (resp. $p<2$) as a measure of spread.
Maximizing ($p>2$) or minimizing ($p<2$) this norm leads to sparse representations (see Ricaud-Torr\'esani~\cite{RicaudTorresani2}).
In the following we will choose $p>2$.
In this case the quantity $ \| {\cal V}_gf\|_p^p$ is differentiable with respect to $g$ and we can use a standard gradient method to find the optimum (here the maximum).

The first optimization problem reads as
\begin{equation}\label{optprob}
  g_{\rm opt}=\mathop{\rm Argmax}_{g\in{\cal C},\|g\|_2=1} \| {\cal V}_gf\|_p^p=\mathop{\rm Argmax}_{g\in{\cal C},\|g\|_2=1}  \sum_{\xi,x\in\Lambda}|\langle M_\xi T_xg , f\rangle |^p,
\end{equation}
with $p> 2$. Under this latter condition, the $p$-norm is a measure of sparsity and maximizing this quantity leads to a sparser TF representation of $f$. The solution $g_{\rm opt}$ of~\eqref{optprob} (whenever it exists) is the optimal window. However, the set of functions $g$ such $\|g\|_2=1$ is not concave.
Hence the above formulation does not lead to a concave problem and convergence issues have to be investigated. In addition, any time frequency shifted copy $M_\xi T_x g_{\rm opt}$ of the optimal window is also an optimal window. This is why we restrict the space of windows to the ones in $\cal C$, centered in time and frequency. We also apply a TF centering to the function $f$, see Sec.~\ref{preprocessing} for the signal preprocessing.

It is also interesting to look at the first order term when $p$ is close to 2 and $\|f\|_2=1$: it yields the entropic optimization problem (c.f. Remark~\ref{rementropy}),
\begin{equation}\label{optprobentropic}
\mathop{\rm Argmax}_{g\in{\cal C},\|g\|_2=1}  \sum_{\xi,x\in\Lambda}-|\langle M_\xi T_xg , f\rangle |^2 \ln |\langle M_\xi T_xg , f\rangle |^2,
\end{equation}
which also maximizes the sparsity and has been used in Jaillet-Torresani~\cite{Jaillet07time} for the optimization of a parametric window.
We may also generalize their approach in the following way. We introduce the parametrized window (chirped and dilated Gaussian-like window),
\begin{equation}\label{ParamGaussian}
g_{\sigma,s}(t)=\sqrt[4]{\frac{2}{N\sigma}}e^{-\pi\frac{t^2}{N\sigma}+i \pi s t^2\frac{N+1}{N}},
\end{equation}
where $\sigma>0$ is the width in time and $s\in\R$ is the chirping parameter. The number of samples of the signal is $N$. The optimization problem becomes:
\begin{equation}\label{optprobparam}
\mathop{\rm Argmax}_{\sigma,s} \| {\cal V}_{g_{\sigma,s}}f\|_p^p.
\end{equation}
The projection on the non-concave set disappears but still the cost function may not be concave in general.

Eventually, another optimization problem which is concave is the following:
\begin{equation}\label{optprob2} 
\mathop{\rm Argmax}_{g} \| {\cal V}_gf\|_p^p-\lambda \|h-g\|_2^2, \qquad p>2, 
\end{equation}
where $h$ is for example a standard Gaussian function centered in time and frequency. 
An elliptic Gaussian or any other well-concentrated function may also do the trick. 
The parameter $\lambda>0$ allows one to tune the shape of the optimal window. 
This formulation as well as~\eqref{optprobparam} are of interest for practical applications: one often wants to obtain a window which is not too eccentric. Indeed, in most applications, TF localization is as important as sparsity. 
Localization in the TF plane is the reason why one chooses the Gabor transform to analyze a signal. 
If this property is lost the Gabor transform coefficients are of little interest.
For example an optimal window whose TF transform is very elongated in one direction of the plane won't be a good optimal window in practice as the localization in that direction will be poor (although the localization in the perpendicular direction is extremely good). The interpretation of the spectrogram will be difficult and filtering based on localized TF regions will not be efficient. 
In~\eqref{optprob2} the localization is constrained by the choice of $\lambda$ and in~\eqref{optprobparam} it is constrained by limits on the values $\sigma$ can take (upper and lower bound).

Before going to the details of the window optimization, let us point out some important remarks.
\begin{remark}
The optimization is done on a fixed Gabor lattice which may not be the ideal one for the detection of the TF pattern. 
Hence, after optimizing the window the next step of our optimization process is to adapt a new lattice $\Lambda^{(1)}$ to the optimal window $g_{\rm opt}$. 
This new sampling strategy depends on the shape of the ambiguity function of the window.
It seeks to tile the TF plane optimally with such atoms, a detailed explanation can be found in Section \ref{optimallattice}.
\end{remark}
\begin{remark}
We enforce sparsity of the Gabor representation from an analysis point of view.
Therefore it is not necessary to compute a dual Gabor window or an inverse Gabor transform during the optimization process.
\end{remark}

\subsection{The non-concave, non-parametric case}
Problem~\eqref{optprob} is non concave but satisfies the hypotheses stated in~\cite[Sec. 5]{Attouch11}, whenever $p$ is a rational number. To prove that, let us first introduce $\widetilde{g}=(\Re(g),\Im(g))\in\R^{2N}$ and rewrite~\eqref{optprob} as:
\begin{align}\label{newoptprob}
\mathop{\rm Argmin}_{\widetilde{g}\in \R^{2N}} {\cal F}(\widetilde{g})+i_C(\widetilde{g}),
\end{align}
where 
$${\cal F}(\widetilde{g})=-\sum_{\xi,x\in\Lambda}\left(\sqrt{\Re \left(\langle g ,T_x M_\xi f\rangle\right)^2+\Im \left(\langle  g ,T_x M_\xi f\rangle\right)^2} \right)^p,\qquad p>2,
$$
and $i_C$ is the indicator function of the closed set $C=\{g: \|g\|_2=1\}$. 
Introducing 
$$(K_{\xi,x}(\widetilde{g},f))^2=\Re \left(\langle g ,T_x M_\xi f\rangle\right)^2+\Im \left(\langle  g ,T_x M_\xi f\rangle\right)^2,
$$
and replacing $g$ and $T_x M_\xi f$ by $\Re(g)+i \Im(g)$ and $\Re(T_x M_\xi f)+i \Im(T_x M_\xi f)$ in the above expression shows that $K_{\xi,x}(\cdot,f)$ is a semialgebraic function. The right hand side is a polynomial in the elements of vector $\widetilde{g}$.

Then ${\cal F}=-\sum_{\xi,x\in\Lambda}\left(K_{\xi,x}(\cdot,f)\right)^p$ and for any integer $p$, the real function $\cal F$ is a polynomial in the elements $\{K_{\xi,x}\}$. Hence $\cal F$ is semialgebraic which implies that it is a lower semicontinuous Kurdyka-Lojasiewicz function (see~\cite{Attouch11} for more details). It can be generalized to any $p$ rational by writing $p=c/d$ and replacing $K_{\xi,x}$ by $K_{\xi,x}^{d}$. Concerning the lower bound on the cost function, ${\cal F}$ is strictly negative and  ${\cal F}=0$ (maximum) if and only if $g=0$ since $\{T_xM_\xi f\}$ is a frame. The constraint $\|g\|_2=1$ ( and $\{T_xM_\xi f\}$ is a frame) implies that the function ${\cal F}+i_C$ is bounded from below. Moreover ${\cal F}$ is differentiable and has a $L$-Lipschitz continuous gradient. To see that one has to differentiate with respect to $\Re(g)$ and $\Im(g)$. It is more convenient to differentiate (in the complex sense) with respect to $\overline{g}\in\C^N$ and use Cauchy-Riemann equations, this is what we do in the following. We have (see below for details of the calculation):
\begin{align}
\nabla_{\overline{g}} \| {\cal V}_{g}f\|_p^p=\frac{p}{2}G_{f,\Lambda,\Gamma}(g),
\end{align}
where $G_{f,\Lambda,\Gamma}$ denotes the following time-frequency masking operator:
\begin{align}\label{eq:tfmasking}
G_{f,\Lambda,\Gamma}(g)=\sum_{\xi,x\in\Lambda} \Gamma_g(x,\xi) \langle g,  M_\xi T_x f\rangle  M_\xi T_x f.
\end{align}
with mask $\Gamma_g(x,\xi)=  |\langle M_\xi T_x f , g\rangle |^{\frac{p}{2}-1} $.
In~\cite{Attouch11} the authors show that under these conditions the forward-backward algorithm converges to a critical point of ${\cal F}+i_C$. 
This procedure iteratively computes
\begin{equation}\label{gradascent}
g_{i+1/2}=g_i+\gamma \nabla_{\overline{g}} \| {\cal V}_{g_i}(h)\|_p^p,
\end{equation}
where $\gamma\in (0,L)$ and then projects the current iterate to the $\ell^2$-sphere by
$$g_{i+1}=\frac{g_{i+1/2}}{\|g_{i+1/2}\|_2}.
$$

\begin{remark}\label{reminfinity}
Let us first treat the particular case of $p=\infty$. We have the following result:
$$\lim_{p\to\infty}\left(\sum_{\xi,x}|\langle M_\xi T_xg , f\rangle |^p\right)^{1/p}=\max_{\xi,x}|\langle M_\xi T_x g , f\rangle |.
$$
The maximum of the scalar product is attained when  $M_\xi T_xg=f$. The optimal window is hence equal to the signal (up to a translation and modulation).
\end{remark}

The convergence results are summarized in the following theorem.
\begin{theorem}\label{Th1}
The forward-backward algorithm associated to the optimization problem~\eqref{optprob} converges to a critical point $g_{\rm opt}$ which depends on $p$ in the following manner:
for any finite $p> 2$, $g_{\rm opt}$ is the limit of $g_i$ when $i$ tends to infinity of the gradient ascent~\eqref{gradascent} with
$\nabla_{\overline{g}} \| {\cal V}_{g}(h)\|_p^p=\frac{p}{2}G_{h,\Lambda,\Gamma}(g)$ where $\Gamma_g(x,\xi)=  |\langle M_\xi T_xg , f\rangle |^{\frac{p}{2}-1} $.
\end{theorem}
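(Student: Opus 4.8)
The plan is to read the reformulation~\eqref{newoptprob} as minimizing $\Phi:={\cal F}+i_C$ over $\R^{2N}$ and to recognize the scheme~\eqref{gradascent} together with the ensuing normalization as the nonconvex proximal forward--backward (gradient-projection) algorithm analyzed in~\cite[Sec.~5]{Attouch11}. Here the explicit gradient step~\eqref{gradascent} is the forward step on the smooth part ${\cal F}$, while $g_{i+1}=g_{i+1/2}/\|g_{i+1/2}\|_2$ is the backward step for $i_C$: the proximal map of the indicator of the $\ell^2$-sphere $C$ is exactly radial projection onto $C$. Since $C$ is \emph{nonconvex}, this is precisely the regime where the Kurdyka--{\L}ojasiewicz machinery of~\cite{Attouch11}, rather than classical convex splitting, is needed; its payoff is convergence of the \emph{entire} sequence $(g_i)$ to a single critical point $g_{\rm opt}$ of $\Phi$, which is the content of Theorem~\ref{Th1}.

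First I would collect the standing regularity already recorded above: ${\cal F}$ is real-valued and $C^1$, $i_C$ is proper and lower semicontinuous because $C$ is closed, and $\Phi$ is bounded below. For the last point the upper frame bound of $\{M_\xi T_x f\}$ gives $\sum_{\xi,x}|\langle f,M_\xi T_x g\rangle|^2\le B$ on the sphere, whence $\|{\cal V}_g f\|_p^p\le B^{p/2}$ for $p>2$, so ${\cal F}=-\|{\cal V}_g f\|_p^p\ge -B^{p/2}$. I would then settle the Lipschitz continuity of $\nabla{\cal F}$: differentiating $|z|^p$ produces the nonlinearity $z\mapsto |z|^{p-2}z$, whose derivative is of order $|z|^{p-2}$ and hence bounded on bounded sets once $p>2$; because every iterate of~\eqref{gradascent} is projected back to the unit sphere, the iterates stay in a fixed ball and admit a uniform Lipschitz constant $L$ there, justifying the step size $\gamma\in(0,L)$. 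The explicit form of the gradient claimed in the theorem I would derive by Wirtinger calculus: writing each coefficient as a holomorphic quantity in $g$ and applying the Cauchy--Riemann relations, differentiation of $|{\cal V}_g f|^p$ with respect to $\overline{g}$ turns each summand into a multiple of $M_\xi T_x f$ weighted by the mask $\Gamma_g$, yielding $\nabla_{\overline{g}}\|{\cal V}_g f\|_p^p=\tfrac{p}{2}G_{f,\Lambda,\Gamma}(g)$ as in~\eqref{eq:tfmasking}.

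The main obstacle is verifying the Kurdyka--{\L}ojasiewicz property for $\Phi$, since this is exactly the ingredient that upgrades subsequential to full-sequence convergence. For rational $p=c/d$ the semialgebraic argument sketched above is complete: substituting $K_{\xi,x}^{d}$ for $K_{\xi,x}$ displays ${\cal F}$ as a polynomial in semialgebraic functions and $i_C$ is semialgebraic because $C$ is an algebraic set, so $\Phi$ is semialgebraic and thus KL. To cover \emph{every} finite $p>2$ as the statement requires, I would pass from semialgebraicity to tameness: all of ${\cal F}$, $i_C$, and $\Phi$ are definable in a fixed o-minimal structure containing the real power functions $t\mapsto t^{p}$, and definable (tame) functions again satisfy the KL inequality, a case explicitly allowed in~\cite{Attouch11}. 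With lower semicontinuity, boundedness from below, Lipschitz gradient and the KL property all in hand, the convergence theorem of~\cite[Sec.~5]{Attouch11} applies and yields the limit $g_{\rm opt}$; the $p$-dependence asserted in the theorem then appears by writing out the critical-point relation $\nabla_{\overline{g}}\Phi=0$ through the mask $\Gamma_g$. The degenerate endpoint $p=\infty$ is excluded from the statement and handled separately in Remark~\ref{reminfinity}.
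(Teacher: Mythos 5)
Your proposal follows essentially the same route as the paper: recast~\eqref{optprob} as minimizing ${\cal F}+i_C$ over $\R^{2N}$, check the hypotheses of the forward--backward convergence theorem of~\cite[Sec.~5]{Attouch11} (lower semicontinuity of $i_C$, boundedness from below, Lipschitz gradient, and the Kurdyka--{\L}ojasiewicz property via semialgebraicity), and obtain the explicit gradient by Wirtinger calculus with the chain rule $\nabla_{\overline{g}}A^{p/2}=\tfrac{p}{2}A^{p/2-1}\nabla_{\overline{g}}A$. You go beyond the paper in one respect worth keeping: the paper's semialgebraicity argument only covers rational $p=c/d$, while Theorem~\ref{Th1} is stated for every finite $p>2$, and your appeal to definability in an o-minimal structure containing $t\mapsto t^{p}$ is exactly what is needed to close that gap, since the KL inequality holds for all functions definable in such a structure and this case is admitted in~\cite{Attouch11}. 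Your derivation of the lower bound ${\cal F}\ge -B^{p/2}$ on the sphere from the upper frame bound, and your observation that the projection step confines the iterates to a bounded set on which $\nabla{\cal F}$ is uniformly Lipschitz, also make precise two points the paper merely asserts.
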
 

\begin{proof}
Convergence issues have been treated in the beginning of this section. we concentrate on the derivation of the gradient. 
Introduce $A(g,\overline{g})=|\langle M_\xi T_x g , f\rangle |^2$, its derivative with respect to $\overline{g}$ (taking $g$ as independent of $\overline{g}$, see~\cite{ComplexGradient} and refs. therein) is:
$$\nabla_{\overline{g}}A(g,\overline{g})=\langle M_\xi T_x g , f\rangle T_x M_\xi f.
$$
One can write ($p>2$)
$$\nabla_{\overline{g}}\sum_{\xi,x}|\langle M_\xi T_x g , f\rangle |^p=\sum_{\xi ,x }\nabla_{\overline{g}}A^{p/2}(g,\overline{g})=\frac{p}{2}\sum_{\xi ,x } A^{\frac{p}{2}-1} (g,\overline{g})\nabla_{\overline{g}}A(g,\overline{g}).
$$
\end{proof}

\begin{remark}[Near $p=2$, the Shannon entropy is not far]\label{rementropy}

Introduce $\eta=2-p$. One can write
$A^\eta=\exp(\eta\ln A)=1+\eta\ln A+\eta^2\ln^2 A+\cdots$.
When $p$ is close to 2 the $\ell^p$-norm is:
\begin{align*}
\sum_{\xi,x}|\langle M_\xi T_xg , f\rangle |^p&=\sum_{\xi,x}|\langle M_\xi T_xg , f\rangle |^2 |\langle M_\xi T_xg , f\rangle |^\eta\\
&=1+\frac{\eta}{2} \sum_{\xi,x}|\langle M_\xi T_xg , f\rangle |^2 \ln|\langle M_\xi T_xg , f\rangle |^2+{\cal O}(\eta^2)
\end{align*}
For small $\eta$ the second term is the leading one in the optimization process. 
The expression of its gradient is $\nabla_{\overline{g}}(A\ln A)=(1+\ln A) \nabla_{\overline{g}}A $.
Note that if $\{T_xM_\xi f\}$ is a tight frame, $\sum_{\xi,x} \nabla_{\overline{g}}A(g,\overline{g})= C g$, ($C>0$ being a constant depending on the frame). 
The leading term in the gradient will be
$$\frac{\eta}{2}\left( Cg+ G_{h,\Lambda,\Gamma}(g)\right),
$$
where the operator $G_{h,\Lambda,\Gamma}(g)$ has the following mask:
$$\Gamma_g(x,\xi)=\ln|\langle M_\xi T_xg , f\rangle |^2.
$$
\end{remark}

\subsection{The non-concave parametric case}

In practice the Gabor transform is often used to visually estimate the behavior of some TF component.
The analysis window $g$ must not be too exotic in order to keep the intuitive interpretation of the transform coefficients.
A first requirement is that the window should be intrinsically concentrated in time and frequency, a second one is that it should be localized in only one connected region of time and frequency. 
A third requirement is that the shape of the window should be close to an ellipsis in the TF plane. It would allow the window to have a privileged direction in the TF plane but not only restricted to either time or frequency.
A sensible way to obtain such a function is to do the optimization in the subset of chirped Gaussians.

Let us introduce the subclass $\cal G$ of chirped Gaussians in $\C^N$ (or generalized Gaussians).
A function in this class is of the form $M_\xi T_x \phi_{\sigma,s}$, where $\phi_{\sigma,s}=g_{\sigma,s}$ of~\eqref{ParamGaussian}, for all possible values of the dilation parameter $\sigma > 0$ and chirping parameter $s \in \R$.
Note that $\|M_\xi T_x\phi_{\sigma,s}\|_2=1$.
We want to maximize the following quantity over $\sigma$ and $s$:
$$
\|{\cal V}_{\phi_{\sigma,s}}f\|_p^p, \qquad p>2.
$$
As in the previous section the problem is not concave, it depends on $f$ in a non trivial way even after factoring out the constant phase indeterminacy. Actually it is interesting to notice that if $f$ is the sum of two Gaussians with same mean but with two different variances, the function $|\langle\phi_{\cdot,s=0},f\rangle|:\sigma\to\R_+$ has two maxima (hence non-concave). 

If we assume that $f$ is such that $|\langle\phi_{\cdot,\cdot},f\rangle|$ is bi-concave (concave in each variable separately) standard optimization algorithms will converge but may converge to a local maximum~\cite{biconvex07}. 
In the following we will add this property as a condition on our function $f$, which in practice covers many TF patterns. One has to keep in mind that there are also many examples of $f$ which do not lead to concavity or bi-concavity. As a few examples we can list:
\begin{itemize}
\item the above example of a sum of 2 Gaussians with different spreading, 
\item a sum of 2 localized functions with different chirping values, forming a "cross" pattern in the TF plane,
\item broad band signals.
\end{itemize}
The convergence of the optimization process is not guaranteed for these patterns.

Using the results of the previous section, the gradient is given by
\begin{align}
\frac{\partial \|{\cal V}_{\phi_{\sigma,s}}f\|^p}{\partial \sigma}=p\ \Re[\langle G_{f,M}\phi_{\sigma,s},\frac{\partial \phi_{\sigma,s}}{\partial \sigma}\rangle],\\
\frac{\partial \|{\cal V}_{\phi_{\sigma,s}}f\|^p}{\partial s}=p\ \Re[\langle G_{f,M}\phi_{\sigma,s},\frac{\partial \phi_{\sigma,s}}{\partial s}\rangle],
\end{align}
where $G_{f,\Lambda,\Gamma}$ is the operator defined in \eqref{eq:tfmasking} with window $f$ and mask $\Gamma=|\langle M_\xi T_xf , \phi_{\sigma,s}\rangle |^{p-2}$. One can see it by writing for $p>2$ even, any window $g$ and function $f$:
\begin{align*}
\sum_{\xi,x}|\langle M_\xi T_xg , f\rangle |^p&=\sum_{\xi,x}\langle M_\xi T_xg , f\rangle^{p/2} \langle f,M_\xi T_xg \rangle^{p/2}
\end{align*}
and 
$$\frac{\partial}{\partial s}\langle M_\xi T_x\phi_{\sigma,s} , f\rangle^{p/2}=\frac{p}{2}\langle \frac{\partial}{\partial s}\phi_{\sigma,s} , T_{-x}M_{-\xi} f\rangle \langle \phi_{\sigma,s} , T_{-x}M_{-\xi} f\rangle^{p/2-1}.
$$
At each step of the gradient method, one computes:
\begin{align}
\sigma_{n+1}&=\sigma_n+\alpha p\ \Re[\langle G_{f,\Lambda,\Gamma^{(n)}}\phi_{\sigma,s}^{(n)},\frac{\partial \phi_{\sigma,s}^{(n)}}{\partial \sigma}\rangle]\\
s_{n+1}&=s_n+\alpha p\ \Re[\langle G_{f,\Lambda,\Gamma^{(n)}}\phi_{\sigma,s}^{(n)},\frac{\partial \phi_{\sigma,s}^{(n)}}{\partial s}\rangle]
\end{align}
and $\phi_{\sigma,s}^{(n+1)}=\phi_{\sigma_{n+1},s_{n+1}}$.

The derivatives of the function $\phi_{\sigma,s}$ with respect to $\sigma$ and $s$ read:
$$\frac{\partial \phi_{\sigma,s}}{\partial \sigma}(t)= \left(\frac{\pi t^2}{\sigma^2 N}-\frac{1}{4\sigma}\right)\phi_{\sigma,s}(t),\qquad
\frac{\partial \phi_{\sigma,s}}{\partial s}(t)=  \frac{i\pi}{N} t^2 \phi_{\sigma,s}(t).
$$
\begin{remark}[Control over the parameters]
At each step we can also control the value of the parameters $\sigma$ and $s$. Additional constraints may be added in the loop. For example if one wants an optimal window with a limited spread in one direction, the value of $\sigma$ may be forced to stay within some predefined bounds.
\end{remark}
\begin{remark}[Gradient step and the BFGS optimization scheme]
Experiments show that it is difficult to find an appropriate gradient step to ensure the convergence of the algorithm (large steps lead to divergence, small steps to extremely slow convergence). Even an adaptive step is not satisfactory because of the shape of the function.
Thus, we decided to resort to a second-order-like method. Since the computation of the second derivatives is hard, we used a BFGS (Broyden-Fletcher-Goldfarb-Shanno, see e.g. \cite{goldfarb70}) technique.
Basically, BFGS, like other Quasi-Newton procedures, is an iterative algorithm which is an approximation of Newton's method, without the need to compute the inverse of the hessian matrix.
This hessian matrix is estimated using the first-order gradient computed for each iteration.
The larger the number of iterations is, the more accurate the hessian estimation will be.
A line search is performed in order to find an acceptable step size (actually an $\alpha$ value which satisfies Wolfe conditions).
The algorithm iterates until the gradient norm becomes small enough, which indicates that a local optimum has been found.
Experimentally, under this form, the algorithm converges rather quickly. The BFGS procedure is designed for tackling concave or convex problems. Even though our objective function was not concave, it did not appear to be a problem in the computations. In our simulations, if the starting point of the optimization was reasonably chosen (typically a Gaussian time-support around half a second, with a chirping parameter equal to zero), the algorithm always converged to an acceptable solution. However, because of the non-concavity of the function, we cannot guarantee the quality of obtained solutions theoretically.
\end{remark}
\subsection{The concave case}
The optimization problem~\eqref{optprob2} is concave and the solution may be obtained using a gradient ascent. Starting from any initial $g_0$ the iterative algorithm computes at step $i+1$:
\begin{equation}
g_{i+1}=(1-\gamma\lambda)g_i+\gamma\lambda h+\gamma \nabla_{\overline{g_i}}  \| {\cal V}_{g_i}f\|_p^p,
\end{equation}
where $\gamma>0$ is the gradient step.
The expression of the gradient $\nabla_{\overline{g_i}} \| {\cal V}_{g_i}f\|_p^p$ depends on $p$ and may be found in Th.~\ref{Th1}.

Preliminary tests were not convincing in term of signal representation compared to the other cases. Moreover choosing $\gamma$ and $\lambda$ is delicate. That is why we concentrated more on the non-convave parametric problem which was more promising.
We present it only in order to show that it is possible to find a concave optimization problem for the window optimization. 

\subsection{Localizing a pattern of interest in the TF plane (data preprocessing)}\label{preprocessing}
The preprocessing of the data  before optimization consists of the 3 following steps.

\noindent{\bf Localization.} Let us suppose that there is a localized TF pattern of interest embedded in the signal $f$.
A Gabor transform with standard lattice and standard Gaussian window can be used to get a first crude estimate of its TF location and spreading.
In order to optimize the Gabor window with respect to this particular pattern we may proceed as follow.

In the first step we extract this pattern by multiplying all the TF coefficient not in the area of interest with zeros and reconstruct (compute the inverse Gabor transform).
The resulting signal, lets call it $h$, contains only the pattern (or more precisely a good approximation of it) to be represented in sparser way. 

\noindent{\bf TF centering.} By translating and modulating $h$ we can center the selected component at the origin of the TF plane. Since the windows belong to the set of TF centered functions, doing so avoids unnecessary TF shifts in the optimization process. It also allows data reduction (see next step). The centering is performed according to the formula given in Sec.~\ref{prelim}.

\noindent{\bf Data reduction.} To further simplify the problem one can reduce the number of samples of $h$, which is at this point still equal to the number of samples of the original function.
Often, it is possible to neglect part of the samples in time if the region of interest has a small time width.
If the pattern covers only a small portion of the frequencies it is possible to downsample $h$ without deleting information.


\section{Adapted lattice}\label{optimallattice}

The choice of a Gabor lattice may be as important as the choice of a window. In this section we describe firstly what is the set of all possible Gabor lattices. Secondly we present what is a good lattice for a given window, in the sense that it optimally packs the windows in the TF plane (for a fixed transform redundancy). Thirdly, we show how to compute the Gabor transform with a non-standard lattice as fast as the one having a standard lattice.

In the discrete setting with signal length $N$ any Gabor lattice $\Lambda$ can be expressed by
\begin{equation}\label{eq:lattnf}
  \begin{pmatrix}
    a & 0 \\
    s & b
  \end{pmatrix}
  \cdot \bbZ_N^2,
\end{equation}
for some uniquely determined $a,b$ which must divide $N$ and $s=k \; b/\gcd(L/a,b)$ for some integer number $k$~\cite{hahotowi12}.
The time frequency plane is given by $\bbZ_N^2$, which is at the same time the finest (and densest) possible lattice.
This means that we can generate all possible lattices by choosing $a$, $b$ and $s$ satisfying the conditions above.
The redundancy of a lattice of the form \eqref{eq:lattnf} is given by $R = L/(ab)$, regardless of the parameter $s$.

As we have seen previously, dilated and chirped Gaussians play a central role in our optimization scheme.
They possess good localization properties, are easy to control and allow for a good interpretation of the spectrogram.
In order to choose the optimal lattice it seems sensible to seek to minimize the condition number of the resulting Gabor system, i.e. the ratio between the upper and lower frame bound $B/A$.
Geometrical arguments have led to the conclusion that hexagonal patterns are the most appropriate for standard Gaussian functions in the continuous case \cite{best03}.

For windows which are dilated and chirped versions of the standard Gaussian, the adapted lattice (which has the same properties as the hexagonal one for the round Gaussians) may be computed knowing the dilation and chirping parameters.
We refer to~\cite{gr01,deon12} for the continuous case and to~\cite{hosowi13,hahotowi12} for the discrete setting.
It is shown in~\cite[Ex. 9.4.1]{gr01} that two Gabor transforms with two different lattices are equivalent provided their respective windows are related by some dilation and chirping; the values of which are directly proportional to the lattice step and shearing values.

Under the assumption that hexagonal sampling is optimal for the standard Gaussian, a careful analysis of the connection between continuous and discrete Gabor transforms yields the theoretical optimal sampling points for a window of the form $\phi_{\sigma,s}$ defined in~\eqref{ParamGaussian}:
\begin{equation}
  \Lambda = \sqrt{\frac{N}{R}}
  \begin{pmatrix}
    1&0\\
    s&1
  \end{pmatrix}
  \begin{pmatrix}
    \sqrt{\sigma} & 0 \\
    0 & 1/\sqrt{\sigma}
  \end{pmatrix}
  \begin{pmatrix}
    \frac{\sqrt[4]{3}}{\sqrt{2}} & 0 \\
    \frac{1}{\sqrt[4]{3}\sqrt{2}}& \frac{\sqrt{2}}{\sqrt[4]{3}}
  \end{pmatrix} \cdot \bbZ_N^2.
  \label{latticetransformation}
\end{equation}
The 3 successive matrices correspond, from left to right, to the shearing, the dilation and the hexagonal shaping of the $N\times N$ square lattice $\Z_N^2$. 
Several entries of these matrices are irrational numbers which will of course not satisfy the requirements we mentioned above. Therefore we make a first approximation by choosing feasible $a$, $b$ and $s$ of $\Lambda$ which are closest to the true values.

In the following we show in more details how the shearing operation is related to chirping in the signal domain.
Moreover, we demonstrate how \emph{a good approximation} of the Gabor transformation with a sheared lattice may be computed using the regular Gabor transform. 
Since we intend to compute optimal windows using non-regular lattices in Sec.~\ref{Loopoptim}, it is important to have a fast computation of the non-separable Gabor transform.

In the discrete setting, due to periodic conditions, the chirp functions are restricted to the ones which revolve around the TF plane $s$ times for $s\in \bbN_0$.
Let $U_s$ denote the chirping operator on a signal $g\in\C^N$:
\begin{equation}\label{chirping}
 U_sg(n)=e^{i\pi s n^2\frac{N+1}{N}}g(n),
 \end{equation}
and let $\Lambda,\Lambda'$ be two Gabor lattices related by a shearing of $s$:
 \begin{equation}\label{defGabLat}
 x'=x,\qquad \xi'=\xi+xs \qquad \forall (x',\xi')\in\Lambda', \ (x,\xi)\in\Lambda.
 \end{equation}
For an integer value of $s$ one has~\cite{hosowi13}
\begin{equation}
 \langle f, M_{\xi'}T_{x'} g\rangle=e^{is\pi x^2\frac{N+1}{N}}\langle U_{-s}f, M_{\xi}T_{x} U_{-s}g\rangle.
\end{equation}
To prove the equation above one needs to interchange chirping and translation.
This is only possible for integer chirping parameters (periodic boundary conditions).

%
%
Under the same restriction, the operator $G_{g,\Lambda',\Gamma}$ involved in the optimization procedure\footnote{This operator is a Gabor multiplier if the frame is tight.}, which is associated to a sheared lattice may be computed using:
 \begin{align*}
 G_{g,\Lambda',\Gamma}(f)&=\sum_{x',\xi'}\Gamma(x',\xi')\langle f, M_{\xi'}T_{x'} g\rangle  M_{\xi'}T_{x'} g\\
 &=\sum_{x',\xi'}\Gamma(x',\xi')\langle  U_{-s}^*U_{-s}f, M_{\xi'}T_{x'}  U_{-s}^*U_{-s}g\rangle  U_{-s}^*U_{-s}M_{\xi'}T_{x'} g\\
 &=U_{-s}^*\sum_{x,\xi}\Gamma(x,\xi+sx)\langle U_{-s}f, M_{\xi}T_{x} U_{-s}g\rangle M_{\xi}T_{x} U_{-s}g,
 \end{align*}
 where
$$\Gamma(x',\xi')=|\langle f, M_{\xi'}T_{x'} g \rangle |^{\frac{p}{2}-1}=|\langle U_{-s}f, M_\xi T_x U_{-s}g \rangle |^{\frac{p}{2}-1}. $$
The phase factor disappears in the expression of the operator. As a consequence,  $G_{g,\Lambda',\Gamma}(f)$ may be calculated with a standard lattice $\Lambda$ but with a chirped version of $f$ and $g$.

As mentioned above, for the finite discrete setting $s\in \bbN$ is necessary to make sure that the chirp does an integer number of revolutions around the TF plane.
The restriction comes from the periodic structure of the discrete signal domain.
However, for concentrated and TF centered signal and window function the boundary terms of the TF representation become negligible, as we will show in the following. 
Hence we can act as if we were without any periodicity requirements and choose whatever $s\in \R$ is needed. 

Let $f$ and $g$ be well localized signals around the origin ($t=N/2$). In the following we will show that the energy of the TF representation ${\cal V}_g f$ is negligible around the boundaries. Assume that for all $t\notin[N/2- N/8,N/2+N/8]$, $|f(t)|\le \varepsilon$ and $|g(t)|\le \varepsilon$, where $\varepsilon$ is small. Translating the window by $x$ gives $T_xg$ centered around $t=N/2+x$ and as a consequence the overlap between $f$ and $T_xg$ is decreased. 
Then for $x\notin [- N/4,N/4]$ and for all $\xi$,
\begin{align}
|\langle f, M_{\xi}T_{x} g\rangle|&=|\sum_{t=0}^{3N/8}\varepsilon M_{\xi}T_{x} g(t)+\sum_{t=3N/8}^{5N/8} f(t)\varepsilon+\sum_{t=5N/8}^{N-1}\varepsilon M_{\xi}T_{x} g(t)|\nonumber\\
&\le \varepsilon \left(\|f\|_1+ \|M_{\xi}T_{x} g\|_1\right)=\varepsilon \left(\|f\|_1+ \|g\|_1\right)\\
&\le \varepsilon \sqrt{N} \left(\|f\|_2+ \|g\|_2\right).\nonumber
\end{align}
The same results hold along the frequency axis, it can be shown by replacing $f$ and $g$ by their Fourier transform.
As a consequence, away from the center of the TF plane the energy of the signal representation may be neglected as well as its influence on the optimal window calculation.

Concerning dilations $\sigma$, the same property of concentration around the origin allows us to dilate as in the continuous case, without taking care of what could happen at the boundaries. 
The value of $\sigma$ is directly given as the output of the window optimization and the lattice parameters $(a,b)$ are replaced by $(\sqrt{\sigma}a,b/\sqrt{\sigma} )$ plus an extra term coming from the redundancy (see Eq.~\eqref{latticetransformation}).

\begin{remark}
If the window is given by the unconstrained optimization problems~\eqref{optprob} or~\eqref{optprob2}, the lattice parameters may be fixed by the following process. Using formula~\eqref{ParamGaussian}, a set of dilated and chirped (periodized or truncated) Gaussian is constructed. The scalar product between the optimal window and the elements of the set is computed. The maximal value obtained for the scalar product gives the dilated and chirped Gaussian closest to the optimal window. The parameters of this function (dilation $\sigma$, chirping $s$) are then used to construct the adapted lattice.
Of course, the shape of the optimal window may be far from an ellipsis in the TF plane, especially when using~\eqref{optprob} and one has to be aware of this limitation.
\end{remark}

\section{Alternating optimal window and adapted lattice}\label{Loopoptim}

We point out that the computation of the optimal window is done with a fixed given lattice. 
The output of the previous step (section~\ref{optimallattice}) gives a lattice adapted to the optimal window. 
But this window is optimal (in terms of concentration of the TF representation of $f$) for a different lattice. It is interesting to see now what is the optimal window for the new lattice.
Ultimately, we want to carry out these steps iteratively, as described below.

\begin{enumerate}
\item Choose an initial Gabor lattice,
\item Find the optimal window $g$,
\item Compute the lattice adapted to $g$,
\item Set this lattice as the Gabor lattice and loop to 2.
\end{enumerate}
Connected to the above procedure there are some rather delicate convergence questions.
Even though we will not provide theoretical justifications for doing this iterative loop of window and lattice optimization, we are able to back it up with numerical experiments.
Some of the experiments carried out numerically are described and visualized in Section \ref{apps}.

\section{Applications}\label{apps}

In this section we illustrate the method(s) with a few numerical examples. We focus on the main properties and results of the optimization procedures. A comprehensive description of their outputs for applications is not within the scope of the present study. A detailed analysis of their applications for audio signal analysis is planned in the near future. 

All the numerical applications have been computed using MATLAB and the LTFAT toolbox~\cite{ltfatnote015}. This toolbox contains functions for time-frequency analysis and synthesis. In particular we have used the discrete Gabor transform and functions designed for the construction of Gabor multipliers. It also contains a Gabor transform for non-separable (sheared) lattices as well as a function for displaying the spectrogram for these non conventional lattices.
\subsection{Some examples of optimal windows for the standard lattice}

\noindent{\bf Non-parametrized vs. parametrized.} We start by comparing the optimal windows obtained by minimizing the non-parametric case~\eqref{optprob} and the parametric case~\eqref{optprobparam}. The signal is a real valued quadratic chirp i.e. a bent pattern in the positive frequencies part of the TF plane and its symmetric counterpart in the negative ones. As a region of interest we pick the positive frequencies in a time window between $0.4$ and $0.7$ seconds. The results are plotted in Figure~\ref{nonparamvsparam}. The two optimal windows are significantly different. In the non-parametric case the window is allowed to have non-local or disconnected components. This freedom permits a better optimal window in terms of energy concentration in the TF plane. However, this window is not appropriate in terms of interpretation of the spectrogram as it introduces artificial components: a sort of shadow chirp component appears (see the spectrograms of the full signal on Fig.~\ref{nonparamvsparam}). The parametrized Gaussian is local and does not allow for these kind of "artefacts". Even if the optimization procedure giving this latter window is restricted to a small subspace, the improvement on the signal representation is noticeable. It is even comparable to the one computed with the non-parametric optimal window. The region chosen for the optimization is a thin line in the TF plane in both cases. Let us notice that the windows are optimal for a selected part of signal, which is the reason for the badly resolved chirps outside of the user selected region of interest.

\begin{figure}
\includegraphics[width=.49\linewidth]{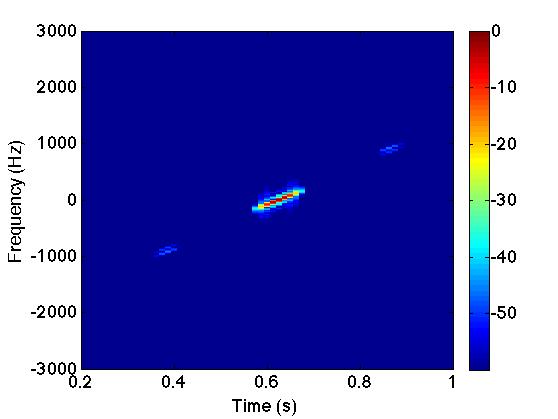}
\includegraphics[width=.49\linewidth]{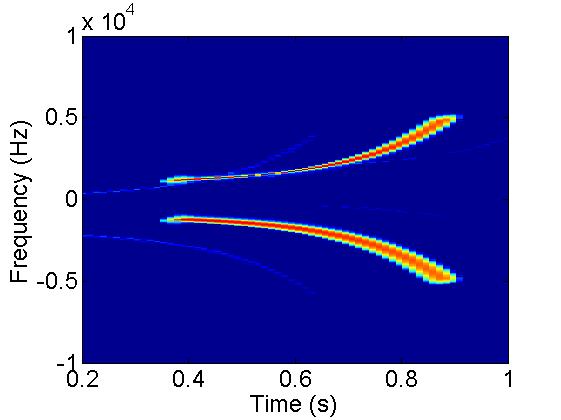}\\
\includegraphics[width=.49\linewidth]{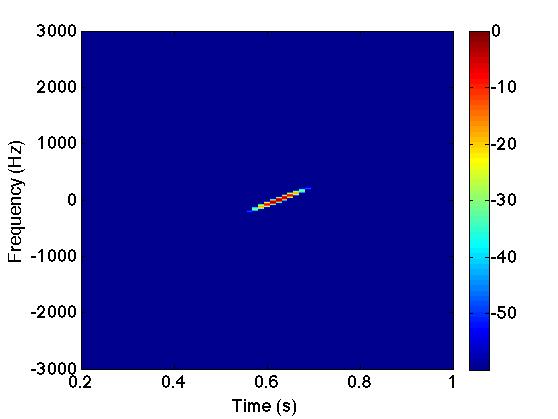}
\includegraphics[width=.49\linewidth]{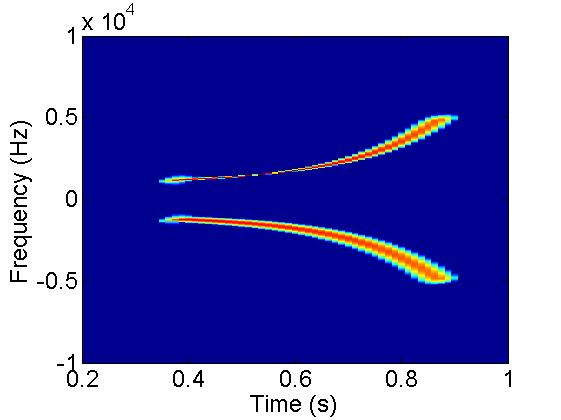}
\caption{Left: Modulus of  the optimal windows ambiguity function (top: non-parametric case, bottom: parametric case), log scale. Right: Spectrograms of a synthetic sound composed of a quadratic chirp using the two respective optimal windows on the left. The optimization has been done with respect to the part of the signal between time 0.4s and 0.7s and frequencies 0 to 5000 Hertz.}\label{nonparamvsparam}
\end{figure}

Let us remark that several tests were performed using large values of $p$ in the non-parametric case. As expected, the optimal window tends to match the extracted signal when $p$ increases (see remark~\ref{reminfinity}).

\noindent{\bf Accurate representation of a localized part.} The next test reveals the main asset of the method for time-frequency analysis. A better concentration in the TF plane implies a better separation of close components. This is illustrated on Fig.~\ref{TFsinus}. On the upper left image the spectrogram of a function $f$ has been computed with a standard (Gaussian) window. One can see that the time-frequency oscillating pattern is made of several nearby components, which seem to merge repeatedly over time. In fact the signal is composed of a sum of three very close frequency components with constant amplitude. These components have instantaneous frequencies $f_i$, with $i=1,2,3$, of the type:
\begin{equation}
f_i=\cos(b\cos c t+a_it+\theta),
\end{equation}
for some constants $a_i,b,c,\theta$.
The modulation is the same for all components so that they stay equally separated over time. The Gaussian window is not adapted for discriminating the three components around regions where the frequency is highly varying as for example around time $1.8$ seconds. As a consequence, the components appear as one thick line in the TF representation. Our window optimization procedure allows to find an appropriate window for this part of the spectrogram. We have extracted the part of the signal between 1.6 and 2s and have run the optimal window procedure. The result is shown on the upper right figure. This time the 3 components are separated around the region $t=1.8s$. However, the optimal window is of course optimal for only a part of the signal. One can see that this window is unable to separate the components well in the area where the Gaussian window succeeded. The bottom left figure could be the ideal representation for this type of signal. The signal has been cut in different successive pieces for which different optimal windows have been computed. For each part the Gabor transform has been computed with its optimal window. The total spectrogram is the sum of the spectrograms of the different parts. The bottom right figure shows the evolution of the energy maximum over time:
$$m(t)=\max_{\xi}{\cal V}_gf(\xi,t),
$$
for the different windows $g$. Since the windows are normalized, a higher maximum means a sparser representation.
\begin{figure}
\includegraphics[width=.49\linewidth]{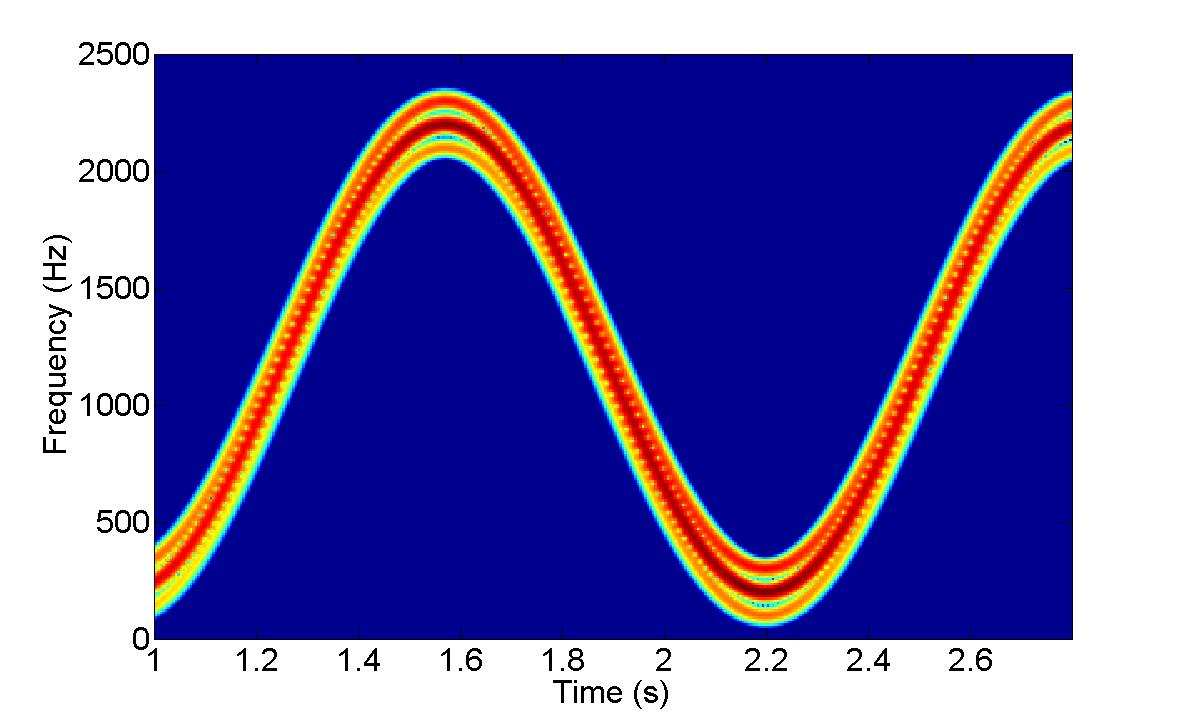}
\includegraphics[width=.49\linewidth]{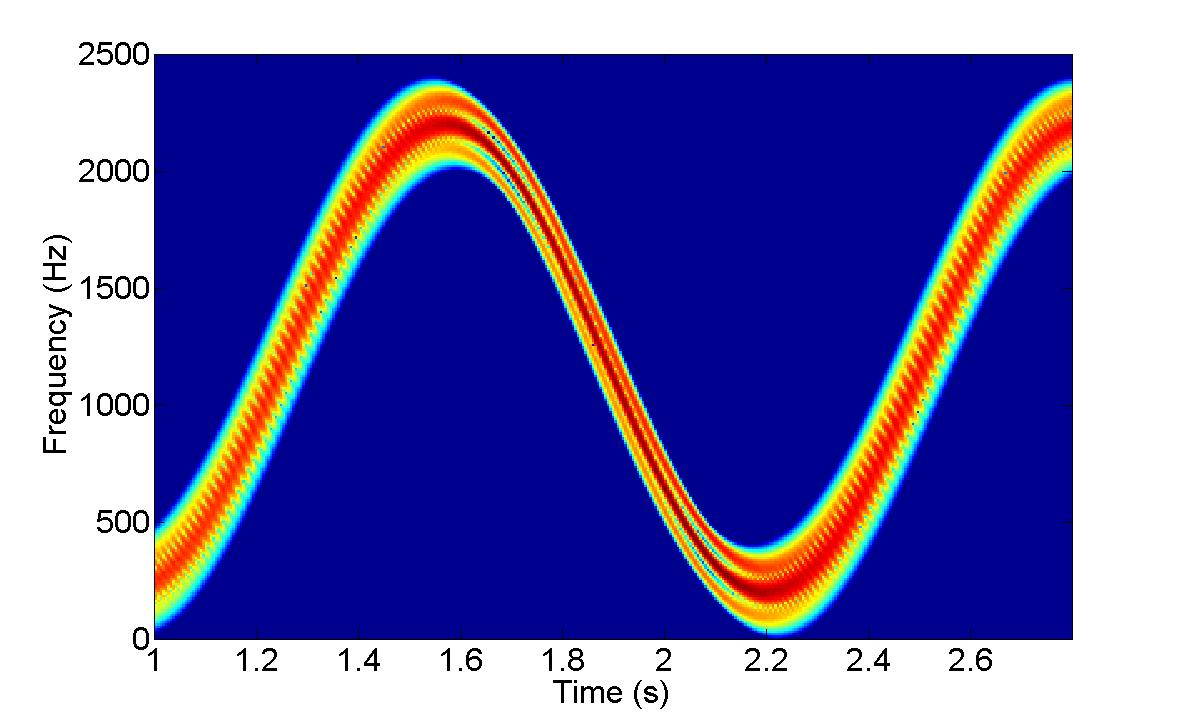}\\
\includegraphics[width=.49\linewidth]{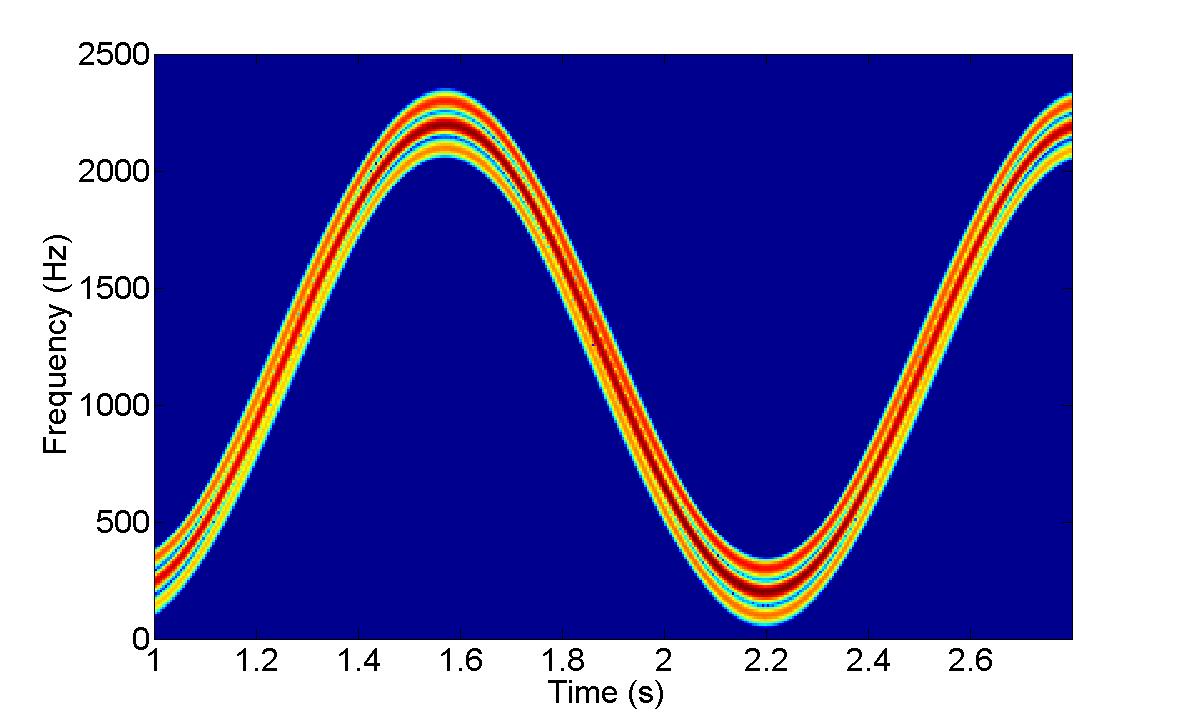}
\includegraphics[width=.49\linewidth]{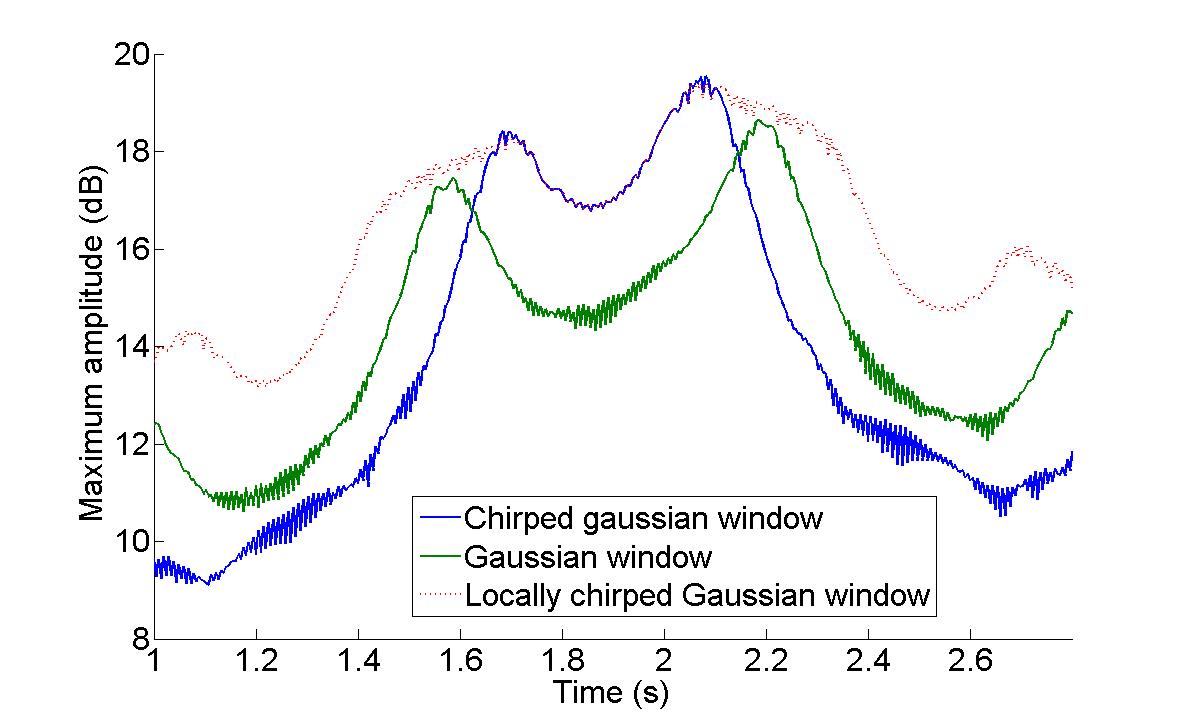}
\caption{Spectrogram of an oscillating chirp in the TF plane with three different windows. Top left: Gaussian window. Top right: Optimal window for time interval $[1.6,2]$. Bottom left: Optimal window evolving with time. Bottom right: Evolution in time of the maximum over frequencies of the spectrogram coefficients, for the 3 windows.}\label{TFsinus}
\end{figure}

\subsection{Combining optimal window and optimal lattice}~\label{optopt}

Given the optimal window, the computation of the adapted lattice gives the most appropriate time and frequency steps. These are two key parameters for an accurate representation. Fig~\ref{compsteps} illustrates this point. Even if the window is optimal, an inappropriate lattice will destroy the accuracy of the representation.

 \begin{figure}
 \includegraphics[width=.49\linewidth]{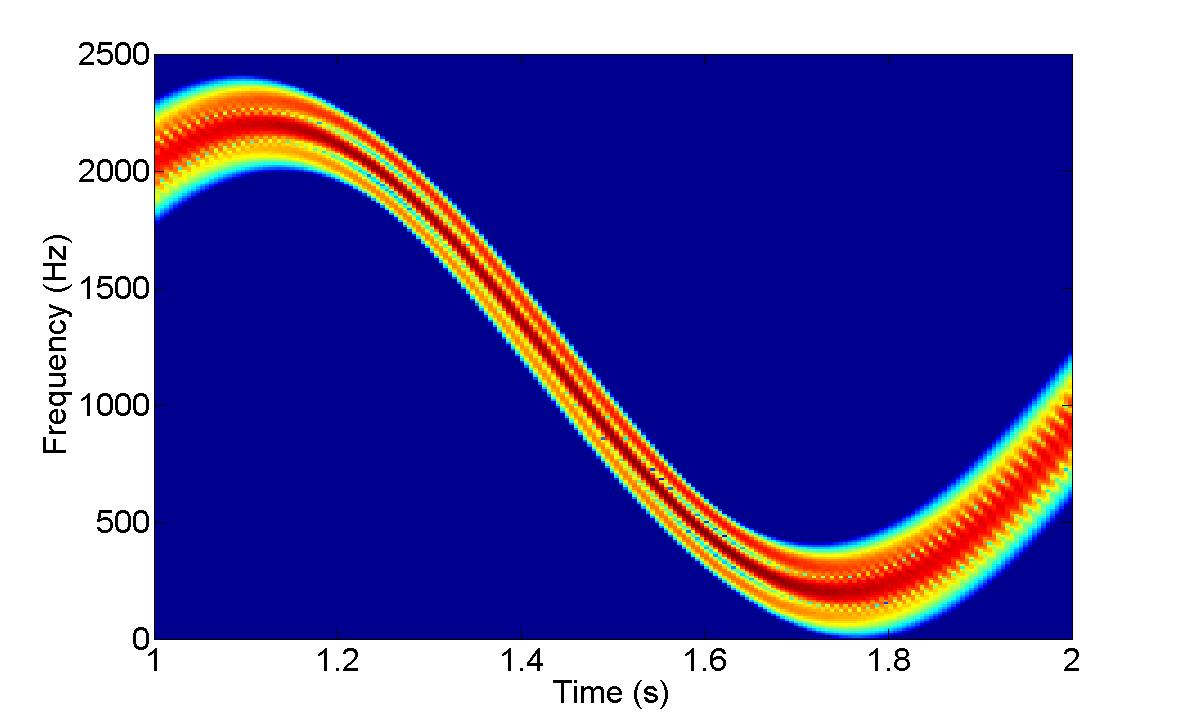}
\includegraphics[width=.49\linewidth]{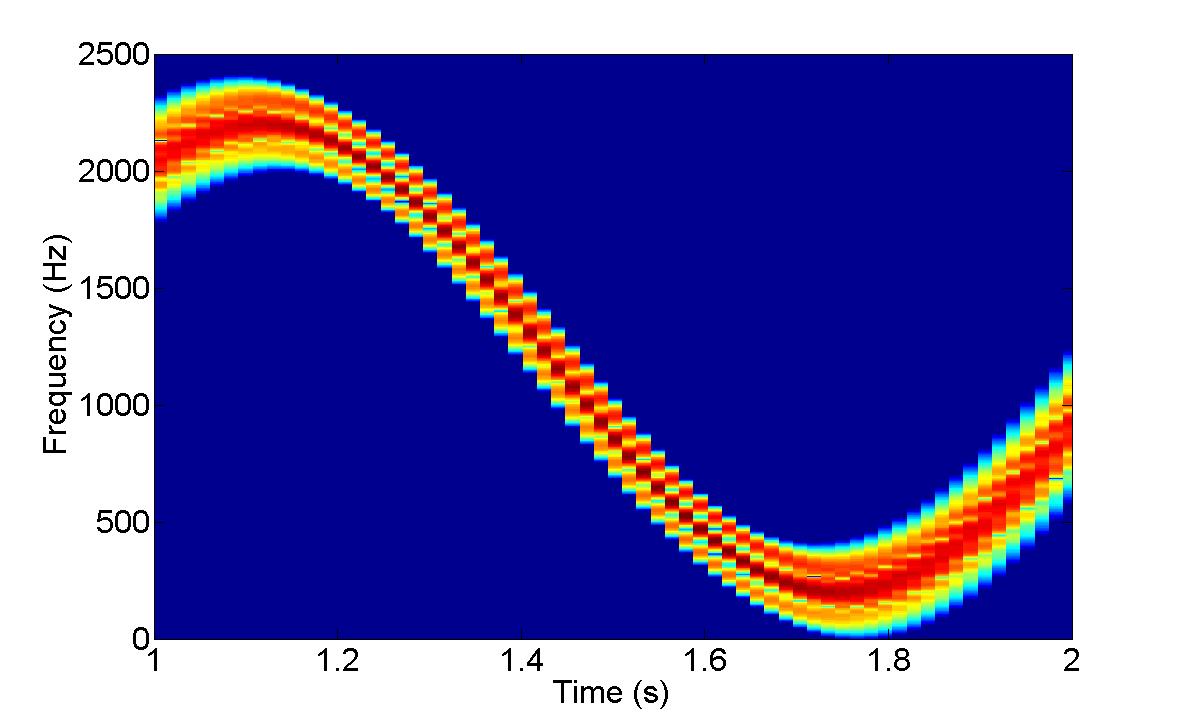}
\caption{Spectrograms of a signal with the same optimal window but for two different lattices having the same redundancy $R$. Left: Optimal time and frequency steps. Right: inappropriate lattice.}\label{compsteps}
\end{figure}

The whole optimization process containing the alternative window and lattice optimization (Sec.~\ref{Loopoptim}) has been tested in different cases. The preliminary results  show that the algorithm converges in many cases. However it appears that the redundancy parameter has an influence on the convergence. Choosing $R=15$ makes the algorithm converge rapidly in 4-5 iterations. 
For lower redundancies, the risk of alternating infinitely between several optimal windows and lattices increases.

Tests show that there are often only minor improvements of the representation sparsity when comparing representations using the optimal window/lattice at convergence and the optimal window/lattice given after the first iteration, see Fig.~\ref{compoptimall}. On this figure, the spectrograms using these two optimal windows are almost identical. One difference is in the position of the Gabor coefficients (see the zoom boxes); they are shifted along the frequency axis by a value varying at each time step, according to the shear parameter. In this case, the sparsity of the representation is not improved by iterating the optimal window/lattice procedure. However, the gain of using the optimal lattice resides in the denser packing in the TF plane of the analysis window. This is an important point for the synthesis aspect. By giving an optimal window and lattice, the optimization process of Sec.~\ref{Loopoptim} may further improve the packing, this is to be tested in future work.
Eventually, one must also notice that the TF pattern on which the optimization is made is rather simple: a short-time chirp signal. The results might be different for other types of patterns (to be investigated).
\begin{figure}
\includegraphics[width=.49\linewidth]{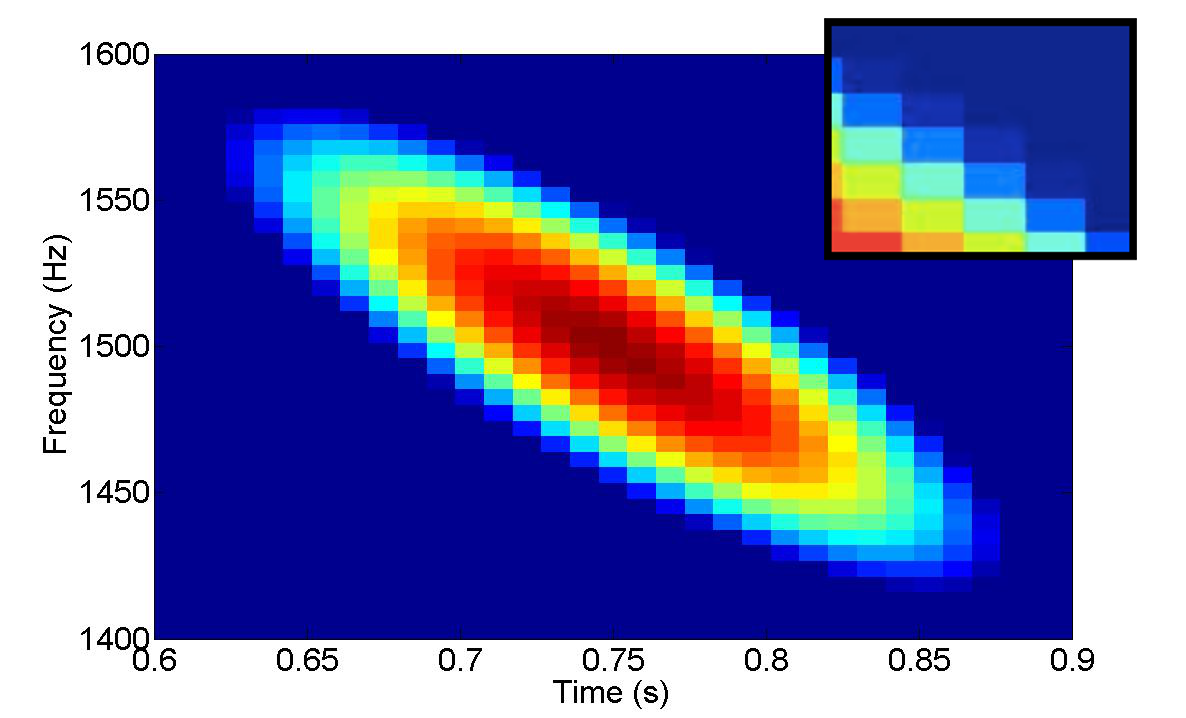}
\includegraphics[width=.49\linewidth]{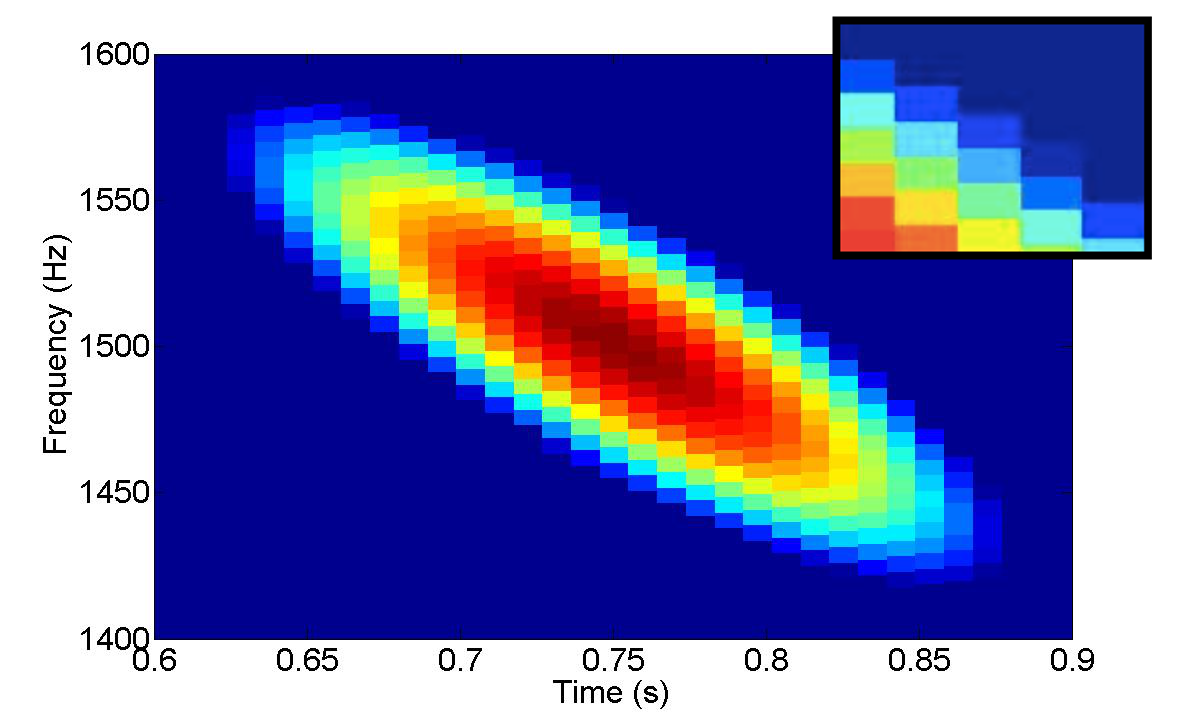}
\caption{Spectrograms of a chirp signal with two different windows and lattices. Left: with the optimal window associated to the optimal lattice obtained after the first iteration of the algorithm of Sec.~\ref{Loopoptim}. Right: with the optimal window and lattice after convergence of the algorithm of Sec.~\ref{Loopoptim}. In the top right corner of each picture a box magnifies the details of the TF tiling. Each pixel (rectangle) corresponds to the magnitude of one Gabor coefficient. The pixels are not aligned in the same manner (the shear values are different).}\label{compoptimall}
\end{figure}

\section{Conclusion and perspectives}

The work presented here shows promising results and several directions for further research.
The 2-parameter optimization scheme, although restricted, proved to be the best choice for the analysis of real world signals.
We noticed that it always converged to an appropriate optimal window for the patterns we were interested in.
We must keep in mind that this might not always be the case; it could reach a bad local optimum for unusual and highly complex patterns in the TF plane.
Another advantage over non-constraint problem is that the optimization of only two parameters  allows a fast computation.

The optimal lattice procedure is found to be highly valuable
as it adds extra power to the optimization both for the visualization (adapted lattice steps) and the synthesis (improving the condition number). It appears to be indispensable. In addition we provide a way for the fast computation of this adapted lattice (even though it is an approximation of it). This is an asset for practical applications.

It is particularly interesting to notice the relation between dilation and chirping of a window and dilation and shearing of a lattice. As seen in Sec.~\ref{optimallattice}, all possible Gabor lattices can be described by 3 parameters: dilation, shearing and redundancy. One can associate to any of these lattices a window parametrized by a chirp and a dilation which will be as optimally packed as would be round Gaussians with an hexagonal lattice. This points out the fact that the set of dilated and chirped version of any well-concentrated window is the natural and most general set of windows for the framework of the standard Gabor transform.

We demonstrated that the alternative lattice/window optimization can be done. This is only a preliminary work which serves as a proof of concept. We have seen that this principle is interesting for the automatic analysis of engineering signals. Future work on this topic must include a study of the convergence of the algorithm as well as an analysis of its relevance beyond a few iterations.

From a practical point of view, we found highly interesting results using a window whose shape evolves in time (c.f. Fig.~\ref{TFsinus}).
As a motivating example, it allows to follow and separate close frequency patterns over time.
The results shown on the figures were obtained by selecting manually successive time intervals and optimizing a window on each.
This process deserves to be made automatic.
However, a non-stationary window breaks the underlying structure of the Gabor transform.
The setting of non-stationary Gabor transforms \cite{badohojave11} is likely to be an appropriate setting to closer investigate windows evolving in time.
This gives a promising direction for future research.

 \bibliographystyle{abbrv}
 \bibliography{nhgbib,benjaminbib}

\end{document}